\newcommand*\MSC[1][1991]{\par\leavevmode\hbox{%
\textit{#1 Mathematical subject classification:\ }}}
\newcommand\blfootnote[1]{%
  \begingroup
  \renewcommand\thefootnote{}\footnote{#1}%
  \addtocounter{footnote}{-1}%
  \endgroup
}
\def \phi {\varphi}
\def \R {\mathbb{R}}
\newcommand{\Rn}{\mathbb R^n}
\newcommand{\Rm}{\mathbb R^m}
\newcommand{\Hn}{\mathbb H^n}
\newcommand{\Ho}{\mathbb H^1}
\newcommand{\p}{\partial}
\newcommand{\bG}{\mathbb {G}}
\newcommand{\bg}{\mathfrak g}
\newcommand{\la}{\lambda}
\numberwithin{equation}{section}
\newcommand{\beq}{\begin{equation}}
\newcommand{\bea}[1]{\begin{array}{#1} }
\newcommand{\eeq}{ \end{equation}}
\newcommand{\ea}{ \end{array}}
\newcommand{\nh}{\nabla_H}
\newcommand{\sul}{\Delta_H}
\newcommand{\nhh}{\tilde{\nabla}_H}
\newtheorem{theorem}{Theorem}[section]
\newtheorem{proposition}[theorem]{Proposition}
\numberwithin{equation}{section}
\begin{document}

\title[A sub-Riemannian maximum modulus theorem]{A sub-Riemannian maximum modulus theorem}

\blfootnote{\MSC[2020]{35R03, 35H10, 31C05}}
\keywords{Bochner formulas. Right-invariant vector fields. Maximum modulus theorem}

\date{}

\begin{abstract}
In this note we prove a sub-Riemannian maximum modulus theorem in a Carnot group. Using a nontrivial counterexample, we also show that such result is best possible, in the sense that in its statement one cannot replace the right-invariant horizontal gradient with the left-invariant one.
\end{abstract}

\author{Federico Buseghin}

\address{Department of Mathematical Sciences\\ University of Bath\\ Bath BA2 7AY\\ United Kingdom}
\email{fb588@bath.ac.uk}

\author{Nicol\`o Forcillo}

\address{Dipartimento di Matematica\\ Universit\`a di Roma Tor Vergata\\ Via della Ricerca Scientifica, 1- 00133 Roma, Italy} 

\email{nikforc6@gmail.com}

\author{Nicola Garofalo}

\address{Dipartimento d'Ingegneria Civile e Ambientale (DICEA)\\ Universit\`a di Padova\\ Via Marzolo, 9 - 35131 Padova,  Italy}
\vskip 0.2in
\email{nicola.garofalo@unipd.it}

\thanks{The third author has been supported in part by a Progetto SID (Investimento Strategico di Dipartimento): ``Aspects of nonlocal operators via fine properties of heat kernels", University of Padova, 2022. He has also been partially supported by a Visiting Professorship at the Arizona State University.}

\maketitle


\section{Introduction and statement of the result}\label{S:intro}

The maximum modulus theorem for the standard Laplacian in $\Rn$ can be formulated as follows. Consider the unit ball $B = \{x\in \Rn\mid |x|<1\}$. Suppose that $f\in C^2(B)$ solves $\Delta f = c$ in $B$, where $c\in \R$, and that furthermore $|\nabla f|\le 1$ in $B$. If $|\nabla f(0)| = 1$, then $f(x) = f(0) + \langle\omega,x\rangle$, for some $\omega \in \mathbb S^{n-1}$ (in fact, $\omega = \nabla f(0)$). One way of proving this result is the following. Let $u(x) = |\nabla f(x)|^2$. Then we have in $B$,
\begin{equation}\label{babybo}
\Delta u = 2 ||\nabla^2 f||^2 + 2 \langle \nabla f,\nabla(\Delta f)\rangle \ge 0,
\end{equation}
where we have denoted by $\nabla^2 f$ the Hessian of $f$, and by $||\nabla^2 f||^2$ the square of its Hilbert-Schmidt norm. Since by the hypothesis, we have $0\le u \le 1$ in $B$, and $u(0) = 1$, by the strong maximum principle it follows that $u\equiv 1$ in $B$. This implies $\Delta u \equiv 0$ in $B$, and it follows from \eqref{babybo} that $||\nabla^2 f||^2 \equiv 0$. Taylor's formula thus gives the desired conclusion $f(x) = f(0) + \langle \nabla f(0),x\rangle$ in $B$.

In this note we are concerned with a sub-Riemannian version of this maximum modulus theorem (for the birth of sub-Riemannian geometry the reader should see \cite{Ca}). This question has an interest in non-holonomic free boundary problems, see \cite{DGP} and \cite{Ged}.  Specifically, let $\bG$ be a stratified nilpotent Lie group, aka Carnot group (see \cite{FS} and also \cite{Be}), and consider the left- and right-invariant vector fields associated with a orthonormal basis $\{e_1,...,e_m\}$ of the bracket generating (horizontal) layer $\bg_1$ of the Lie algebra $\bg$, see Section \ref{S:back}. Given a function $f\in C^1(\bG)$, respectively denote by
\begin{equation}\label{nabla}
|\nh f|^2 = \sum_{i=1}^m (X_i f)^2,\ \ \ \ \ \ |\nhh f|^2 = \sum_{i=1}^m (\tilde X_i f)^2,
\end{equation}
the left- and right-invariant \emph{carr\'e du champ} of $f$ relative to $\{X_1,...,X_m\}$. It is clear that, if we indicate with $e\in \bG$ the group identity, since $X_i(e) = \tilde X_i(e)$ for $i=1,...,m$, we have $|\nh f(e)|^2 = |{\tilde{\nabla}}_H f(e)|^2$. But the two objects in \eqref{nabla} are substantially different, except in the trivial situation in which the function $f$ depends exclusively on the horizontal variables, see for instance \eqref{difference} below. Consider now the left-invariant horizontal Laplacian associated with the basis $\{e_1,...,e_m\}$, defined by
\begin{equation}\label{L}
\sul  = \sum_{i=1}^m X_i^2. 
\end{equation}
Thanks to the result in \cite{Ho} this operator is hypoelliptic (but not real-analytic hypoelliptic, in general). 
Given a non-isotropic gauge $\rho$, centred at the group identity $e\in \bG$, for every $r>0$ we indicate $B_r = \{p\in \bG\mid \rho(p)<r\}$. Also, a point $p\in \bG$ will be routinely identified with its logarithmic coordinates in the Lie algebra $\bg$. We will thus write $p \cong (z,\sigma)$, where $z = (z_1,...,z_m)$ will denote the variable in the horizontal, bracket generating layer of $\bg$, and $\sigma$ will indicate the bulk variable in the remaining layers, see Section \ref{S:back}. We note that, thanks to the stratification of the Lie algebra $\bg$, a linear function $f:\bG\to \R$ cannot possibly depend on the variable $\sigma$, and must therefore be of the form $f(p) = \alpha + \langle\omega,z\rangle$, for some $\alpha\in \R$ and $\omega\in \Rm$. We prove the following result.

\begin{theorem}\label{T:main}
Let $f\in C^2(B_r)$ for a certain $r>0$, and suppose that for some $c\in \R$ we have $\sul f = c$ in $B_r$ and 
\begin{equation}\label{cond}
|\nhh f|^2 \le 1.
\end{equation}
If $|{\tilde{\nabla}}_H f(e)|^2 = 1$, then $f$ depends only on the horizontal variables $z$, and in fact  
\[
f(p) = f(e) + \langle \nabla_z f(e) ,z\rangle.
\]
\end{theorem}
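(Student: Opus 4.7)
The plan is to run the Euclidean argument recalled in the introduction, but applied to the \emph{right-invariant} horizontal carr\'e du champ $u=|\nhh f|^2$. The structural reason this works, and the only place where the use of $\nhh$ rather than $\nh$ enters, is the commutation identity $[X_k,\tilde X_i]=0$ for $i,k\in\{1,\dots,m\}$, which encodes the fact that left and right translations on $\bG$ commute. The counterexample alluded to in the abstract should confirm that the naive analogue with $\nh f$ in place of $\nhh f$ fails precisely because $[X_k,X_i]$ does not vanish in general.

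A direct differentiation, using $[X_k,\tilde X_i]=0$ twice, produces the sub-Riemannian Bochner-type identity
\[
\sul u \;=\; 2\sum_{i,k=1}^{m}(X_k\tilde X_i f)^2 \;+\; 2\sum_{i=1}^{m}(\tilde X_i f)\,\tilde X_i(\sul f)\;=\;2\sum_{i,k=1}^{m}(X_k\tilde X_i f)^2,
\]
since $\sul f=c$ is constant. Thus $\sul u\ge 0$ in $B_r$. By hypoellipticity of $\sul$, $f$ and hence $u$ are smooth in $B_r$, and the strong maximum principle for $\sul$ on the connected open set $B_r$, combined with $u\le 1$ and $u(e)=1$, forces $u\equiv 1$. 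Plugging this back into the identity above gives $X_k\tilde X_i f\equiv 0$ on $B_r$ for every $k,i$.

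To conclude, fix $i$ and set $g:=\tilde X_i f$. The vanishing $X_k g\equiv 0$ for every $k$ passes to every iterated bracket $[X_{j_1},[X_{j_2},\dots]]g\equiv 0$, and by H\"ormander's condition these commutators span the tangent space at every point; hence $g$ is locally constant, and since $B_r$ is connected, $g\equiv\tilde X_i f(e)$. Writing $\omega_i:=\tilde X_i f(e)=X_i f(e)=\partial_{z_i}f(e)$, the hypothesis $|\nhh f(e)|^2=1$ gives $\omega=\nabla_z f(e)\in\mathbb S^{m-1}$. In exponential coordinates the Baker--Campbell--Hausdorff formula yields $\tilde X_i z_j=\delta_{ij}$ (the bracket corrections live strictly above the first layer), so $h(p):=f(p)-f(e)-\langle\omega,z\rangle$ satisfies $\tilde X_i h\equiv 0$ for every $i$ and $h(e)=0$. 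Applying the same bracket-generation argument to the right-invariant (still H\"ormander) family $\tilde X_1,\dots,\tilde X_m$ delivers $h\equiv 0$ on $B_r$, which is the stated formula. The only real obstacle is the Bochner step: without the commutation $[X_k,\tilde X_i]=0$ the cross term $(\tilde X_i f)\,\sul(\tilde X_i f)$ would survive with no definite sign and the subharmonicity of $u$ would be lost; the remaining steps are routine exploitations of the strong maximum principle and of the bracket-generating structure of the horizontal frame.
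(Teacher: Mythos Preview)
Your proof is correct and follows the paper's overall strategy: the right-invariant Bochner identity (via $[X_k,\tilde X_i]=0$), Bony's strong maximum principle, and bracket generation. The one genuine difference is in the closing linearity step. The paper, having obtained $X_i f\equiv c_i$ (from $\tilde X_j X_i f\equiv 0$), invokes Proposition~\ref{P:Z} to express the dilation generator $\mathscr Z$ in the left-invariant frame, computes $\mathscr Z f(\delta_\lambda p)=\lambda\langle c,z\rangle$, and integrates along the dilation trajectory $\lambda\mapsto\delta_\lambda p$, using that $B_r$ is starlike with respect to $e$. Your route is more direct: from $\tilde X_i f\equiv\omega_i$ and the BCH observation $\tilde X_i z_j=\delta_{ij}$ you subtract off the linear candidate and apply bracket generation once more to the right-invariant frame to kill $h=f-f(e)-\langle\omega,z\rangle$. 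This bypasses the dilation machinery and the starlikeness of $B_r$ entirely, and is the cleaner finish; the paper's approach, on the other hand, makes explicit that the higher-layer left-invariant derivatives $X_{j,\ell}f$ vanish, which is of some independent interest.
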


We emphasise  that Theorem \ref{T:main} has a best possible character, in the sense that it is false if in its statement we replace the condition \eqref{cond} with the seemingly ``more natural" left-invariant one
\begin{equation}\label{lcond}
|\nh f|^2 \le 1.
\end{equation}
We prove in fact the following nontrivial phenomenon.

\begin{proposition}\label{P:ce}
In the Heisenberg group $\mathbb H^n$ there exist non  linear solutions of the equation $\sul f = 0$ in a ball $B_r$ (for some $r>0$), such that $|\nabla_H f(e)|^2 = 1$, and for which moreover \eqref{lcond} hold.
\end{proposition}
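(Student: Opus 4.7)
My plan is to construct an explicit nonlinear $\sul$-harmonic function $f$ on $\Ho$ with $|\nh f(e)|^2=1$ and $|\nh f|^2\le 1$ in a small gauge ball $B_r$ around $e$; the general $\Hn$ case then follows by extending $f$ constantly in the additional horizontal directions. The conceptual reason such an $f$ must exist is the absence of a ``good'' Bochner identity: the Euclidean argument recalled in the introduction relies on \eqref{babybo}, which gives $\Delta(|\nabla f|^2)\ge 0$, but for the sub-Laplacian in $\Ho$ the analog fails. Let $X$ and $Y$ denote the two left-invariant horizontal fields, so that $[X,Y]=\partial_t$, $[\sul,X]=-2Y\partial_t$, and $[\sul,Y]=2X\partial_t$. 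A direct computation then shows that, whenever $\sul f=0$,
\[
\sul(|\nh f|^2) \;=\; 2|\nabla_H^2 f|^2 \;+\; 4\bigl[(Yf)(X\partial_t f)-(Xf)(Y\partial_t f)\bigr],
\]
and the bracketed ``torsion'' term has indefinite sign. Hence $|\nh f|^2$ need not be $\sul$-subharmonic, and this is precisely the gap that makes the counterexample possible.

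To find $f$, I would work within the ansatz $f=x+\varepsilon g$ with $g$ a $\sul$-harmonic polynomial and $\varepsilon>0$ small. Expanding gives $|\nh f|^2-1=2\varepsilon Xg+\varepsilon^2(|Xg|^2+|Yg|^2)$, so one needs $Xg(e)=Yg(e)=0$ together with $Xg\le 0$ in a neighborhood of $e$. A natural first guess is the weight-three harmonic polynomial $g_0(x,y,t)=yt-\tfrac{1}{6}x^3$, for which one verifies directly that $\sul g_0=0$, $Xg_0=-\tfrac{1}{2}(x^2+y^2)\le 0$, and $Yg_0=t+\tfrac{1}{2}xy$. The $Xg_0$ side is exactly what one wants, but on the central axis $\{(0,0,t)\}$ one has $(Yg_0)^2=t^2$, forcing $|\nh f|^2(0,0,t)=1+\varepsilon^2 t^2>1$. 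The decisive step is then to adjoin a further $\sul$-harmonic correction, of higher weighted degree or in closed form, that cancels the growth of $Yf$ along the central axis while leaving the favorable sign of $Xf-1$ intact.

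Once an admissible $f$ has been produced, the remaining verification is routine: harmonicity is preserved by linearity, $|\nh f(e)|^2=1$ follows from $Xf(e)=1$ and $Yf(e)=0$, the inequality $|\nh f|^2\le 1$ in $B_r$ reduces to a polynomial or closed-form inequality that can be settled by completing the square, and the essential $t$-dependence of $f$ excludes the linear form $f(p)=\alpha+\langle\omega,z\rangle$. The main obstacle is producing the harmonic correction alluded to above: it must be $\sul$-harmonic, cancel the $t$-axis growth of $Yf$, and preserve the weight-two dominance of $Xg\le 0$, all uniformly throughout a full ball rather than merely infinitesimally at $e$. Reconciling these competing demands is, I expect, the most delicate part of the argument.
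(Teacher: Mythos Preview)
Your strategy is exactly the paper's: perturb the linear function $x$ by a degree-three $\sul$-harmonic polynomial (your $g_0=yt-\tfrac16x^3$ is precisely the paper's $P_3=6y\sigma-x^3$ up to scale), observe that the resulting $|\nh f|^2$ fails the inequality only because of a positive $t^2$-term coming from $(Yf)^2$ on the central axis, and then correct with a higher-degree harmonic polynomial. You have also correctly isolated the obstruction.

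The gap is that you stop before producing the correction, and you explicitly flag this as the ``most delicate part.'' In the paper this step is not left abstract: the fix is the explicit degree-five harmonic polynomial
\[
P_5(x,y,\sigma)=x\sigma^2-\tfrac18 xy^4+\tfrac13 y^3\sigma-\tfrac{1}{40}x^5,
\]
and one takes $f=P_1+P_3-21\,P_5$ (no small parameter $\varepsilon$). The point is that $X_1P_5$ begins with $\sigma^2$, so the cross term $2\cdot 1\cdot(-21\sigma^2)$ in $(X_1f)^2$ overcompensates the $+36\sigma^2$ contributed by $(X_2f)^2\approx(6\sigma)^2$. A direct computation then yields
\[
|\nh f|^2 = 1 - 6\bigl(|z|^2+\sigma^2\bigr) + k(x,y,\sigma),\qquad k=O\bigl((|z|^2+\sigma^2)^{3/2}\bigr),
\]
so the quadratic part of $|\nh f|^2-1$ is \emph{strictly} negative definite in the Euclidean sense, and the inequality $|\nh f|^2<1$ holds on a full punctured Euclidean (hence gauge) ball around $e$. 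Your $\varepsilon$-perturbative framing makes the quadratic form in $(|z|^2,t^2)$ indefinite for every $\varepsilon$, which is why the degree-three ansatz alone cannot close; the degree-five term is not a refinement but a structural necessity. Once you insert $P_5$ as above, the remaining verifications are exactly the routine ones you describe.
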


Theorem \ref{T:main} and Proposition \ref{P:ce} underline a striking difference between Riemannian and sub-Riemannian geometry. We recall that the celebrated identity of Bochner states that on a Riemannian manifold $M$ one has for $f\in C^3(M)$
\begin{equation}\label{boe}
\Delta(|\nabla f|^2) = 2 ||\nabla^2 f||^2 + 2 \langle\nabla(\Delta f),\nabla f\rangle + 2 \operatorname{Ric}(\nabla f,\nabla f),
\end{equation}
where $\operatorname{Ric}(\cdot,\cdot)$ indicates the Ricci tensor on $M$, see e.g. \cite[Sec. 4.3 on p.18]{CN}. This implies in particular that if $\Delta f = c$ for some $c\in \R$, and $\operatorname{Ric}(\cdot,\cdot)\ge 0$, then
\begin{equation}\label{bosub}
\Delta(|\nabla f|^2) \ge 2 ||\nabla^2 f||^2.
\end{equation}
In sub-Riemannian geometry the fundamental subharmonicity property \eqref{bosub} fails miserably, see \cite{Ged}, and also \eqref{op} in Section \ref{S:proof} below. To remedy this negative situation one must bring the right-invariant vector fields $\tilde X_i$ to center stage. 
We mention that in harmonic analysis and partial differential equations the idea of using right-invariant vector fields in a left-invariant problem has been fruitfully explored in the works \cite{MPR}, \cite{BL}, \cite{DGP}, \cite{Gmanu}, \cite{Gmanu2}, \cite{MM}, \cite{CCM}.

Concerning the results in this note we mention that, similarly to the Euclidean maximum modulus theorem, the proof of Theorem \ref{T:main} is based on a Bochner type identity for the right-invariant \emph{carr\'e du champ} $|\nhh f|^2$, see Proposition \ref{P:Br} below, and on Bony's strong maximum principle. The stratification of the Lie algebra ultimately plays again a role in concluding that $f$ must be linear. To prove Proposition \ref{P:ce}, instead, we construct a nontrivial  explicit example of a harmonic function in the Heisenberg group $\mathbb H^1$ for which the maximum modulus theorem fails for the left-invariant \emph{carr\'e du champ}, see \eqref{f} below.

This note contains three sections. Besides the present one, in Section \ref{S:back} we collect some background material that is needed in Section \ref{S:proof}, where we prove Theorem \ref{T:main} and Proposition \ref{P:ce}. 


\section{Background material}\label{S:back}

In this section we collect some background material that is needed in the rest of the paper. 
 A Carnot group of step $k\ge 1$ is a simply-connected real Lie group $(\bG, \circ)$ whose Lie algebra $\bg$ is stratified and $k$-nilpotent. This means that there exist vector spaces $\bg_1,...,\bg_k$ such that:  
\begin{itemize}
\item[(i)] $\bg=\bg_1\oplus \dots\oplus\bg_k$;
\item[(ii)] $[\bg_1,\bg_j] = \bg_{j+1}$, $j=1,...,k-1,\ \ \ [\bg_1,\bg_k] = \{0\}$.
\end{itemize}
We assume that $\bg$ is endowed with a
scalar product $\langle\cdot,\cdot\rangle$ with respect to which the layers $\bg_j's$, $j=1,...,r$,
are mutually orthogonal. We let $m_j =$ dim$\ \bg_j$, $j=
1,...,k$, and denote by $N = m_1 + ... + m_k$ the topological
dimension of $\bG$. From the assumption (ii) on the Lie
algebra it is clear that any basis of the first layer $\bg_1$ bracket generates the whole of $\bg$. Because of such special role $\bg_1$ is usually called the horizontal
layer of the stratification. For ease of notation we henceforth write $m = m_1$. 

Given $\xi, \eta\in \bg$, the Baker-Campbell-Hausdorff formula reads
\begin{equation}\label{BCH}
\exp(\xi) \circ \exp(\eta) = \exp{\bigg(\xi + \eta + \frac{1}{2}
[\xi,\eta] + \frac{1}{12} \big\{[\xi,[\xi,\eta]] -
[\eta,[\xi,\eta]]\big\} + ...\bigg)},
\end{equation}
where the dots indicate commutators of order four and higher, see \cite[Sec. 2.15]{V}. Since by (ii) above all commutators of order $k$ and higher are trivial, in every Carnot group the series in the right-hand side of \eqref{BCH} is finite. 
Notice that \eqref{BCH} assignes a group law in $\bG$, which is noncommutative when $k>1$. If $p = \exp \xi,  p' = \exp \xi'$, the group law $p \circ p'$ in $\bG$ is obtained from \eqref{BCH} by the algebraic commutation relations between the elements of its Lie algebra. 

Carnot groups are naturally equipped with a one-parameter family of automorphisms $\{\delta_\lambda\}_{\lambda>0}$  which are called the group dilations. One first defines a family of non-isotropic dilations $\Delta_\lambda :\bg \to \bg$ in the Lie algebra by assigning the formal degree $j$ to the $j$-th layer $\bg_j$ in the stratification of $\bg$. This means that if $\xi = \xi_1 + ... + \xi_k \in \bg$, with $\xi_j\in \bg_j$, $j = 1,...,k,$ one lets
\begin{equation}\label{dilg}
\Delta_\lambda \xi = \Delta_\lambda (\xi_1 + \cdots  + \xi_k) = \lambda \xi_1 + \cdots +  \lambda^k \xi_k.
\end{equation}
One then uses the exponential mapping to push forward \eqref{dilg} to the group $\bG$, i.e., we define a one-parameter family of group automorphisms $\delta_\la :\bG \to \bG$ by the equation
\begin{equation}\label{dilG}
\delta_\lambda(p) = \exp \circ \Delta_\lambda \circ \exp^{-1}(p),\quad\quad\quad p\in \bG.
\end{equation}
We will denote by $\rho$ a fixed homogeneous norm in $\bG$, see p. 8 in \cite{FS}, and set 
\begin{equation}\label{balls}
B_r = \left\{p\in \bG \mid \rho(p)<r\right\}.
\end{equation}
Notice that, since $\rho(\delta_\la p) = \la \delta(p)$, we infer that for every $p\in B_r$ one has $\delta_\la p\in B_r$ for every $0\le \la \le 1$. This means that $B_r$ is starlike with respect to the group identity $e\in \bG$, see \cite{DGS}. This simple, yet basic property, will be used in the proof of Theorem \ref{T:main} below.

Given a Carnot group $(\bG,\circ)$, we denote the left-translation operator by $L_p(p') = p \circ p'$ and with $dL_p$ its differential. The right-translation will be denoted by $R_p(p') = p' \circ p$, and its differential by $dR_p$. If we fix an orthonormal basis $\{e_1,...,e_m\}$ of the horizontal layer $\bg_1$, then we can define respectively left- and right-invariant vector fields by the formulas
\begin{equation}\label{Xi}
X_i(p) = dL_p(e_i),\ \ \ \ \ \ \ \tilde X_i(p) = dR_p(e_i).
\end{equation}
As we have mentioned, the two objects $|\nhh f|^2$ and $|\nh f|^2$ differ substantially. For instance, in the special case in which $\bG$ is a group of step $k=2$, with group constants $b^{\ell}_{ij}$, and (logarithmic) coordinates $p = (z_1,...,z_m,\sigma_1,...,\sigma_{m_2})$, one has
\begin{equation}\label{difference}
|\nh f|^2- |\nhh f|^2 =  2 \sum_{\ell=1}^{m_2} \left(\sum_{1\le i<j\le m} b^{\ell}_{ij} \left(z_i \p_{z_j} f - z_j \p_{z_{i} }f\right)
\right) \p_{\sigma_{\ell}} f,
\end{equation}
see \cite[Lemma 2.3]{Gmanu2}.
More in general, for any $\zeta \in \bg$ we respectively indicate with $Z$,
and $\tilde Z$ the left- and right-invariant vector fields on
$\bG$ defined by the Lie formulas 
\begin{equation}\label{i1}
Zf(p)= \frac{d}{dt}\ f(p\circ \exp(t\zeta))\big|_{t=0},\quad\quad\quad \ \tilde{Z} f(p) = \frac{d}{dt}\ f(
\exp(t\zeta)\circ p)\big|_{t=0} .
\end{equation}
For any $\eta , \zeta \in \bg$, for the corresponding
vector fields on $\bG$ we have the following basic commutation identities 
\begin{equation}\label{i2}
[Y , \tilde Z] = [\tilde Y , Z] = 0,
\end{equation}
see \cite{DGP}.
Such identities can be easily verified using \eqref{i1} and the Baker-Campbell-Hausdorff formula. 
From \eqref{i2} we have in particular $[X_i,\tilde X_j] = 0$, for $i, j=1,...,m$. This fact will be used in the proof of Proposition \ref{P:Br} below.

Denote now by $\mathscr Z$ the infinitesimal generator of the dilations $\{\delta_\la\}_{\la>0}$. We recall that this vector field acts on a function $f:\bG\to \R$ according to the Lie formula
\begin{equation}\label{Z}
\mathscr Z f(p) = \lim_{\lambda\to 1} \frac{f\big(\delta_\lambda (p)\big) - f(p)}{\lambda - 1} ,\quad\quad\quad p\in \bG. 
\end{equation}
Using \eqref{dilG} and \eqref{Z}, it is easy to recognise that, if we identify $p = \exp((z,\sigma))\in \bG$ with its logarithmic coordinates $(z_1,...,z_m,\sigma_{2,1},...,\sigma_{2,m_2},...,\sigma_{k,1},...,\sigma_{k,m_k})$, then
the vector field $\mathscr Z$ takes the form
\begin{equation}\label{Z1}
\mathscr Z = \sum_{j=1}^m z_j \frac{\partial}{\partial z_j} + 2 \sum_{s=1}^{m_2} \sigma_{2,s} \frac{\partial}{\partial \sigma_{2,s}} + ... + k \sum_{\ell=1}^{m_k} \sigma_{k,\ell} \frac{\partial}{\partial \sigma_{k,\ell}}.
\end{equation}
From the representation \eqref{Z1} and the Baker-Campbell-Hausdorff formula one can prove the following result. For every $j=1,...,k$ denote by $\{e_{j,1},...,e_{j,m_j}\}$ an orthonormal basis of the layer $\bg_j$ of the Lie algebra, keeping in mind that when $j=1$ we write $\{e_1,...,e_m\}$. As in \eqref{Xi}, define left-invariant vector fields in $\bG$ by setting 
\[
X_{j,\ell}(p) = dL_p(e_{j,\ell}),\ \ \ \ \ \ \ \ \ \ j=1,...,k,\ \ell =1,...,m_j.
\]
Again, we denote by $\{X_1,...,X_m\}$ the family $\{X_{1,1}, ... ,X_{1,m_1}\}$.

\begin{proposition}\label{P:Z}
There exist polynomials in $\bG$, $Q_{j,\ell}$, $j=1,...,k$, $\ell =1,...,m_j$, where for each $j=1,...,k$ the function $Q_{j,\ell}$ is homogeneous of degree $j$, i.e., $\delta_\lambda Q_{j,\ell} = \lambda^j Q_{j,\ell}$,
such that
\[
\mathscr Z = \sum_{\ell=1}^m Q_{1,\ell} X_\ell + \sum_{\ell=1}^{m_2} Q_{2,\ell} X_{2,\ell} + ... + \sum_{\ell=1}^{m_k} Q_{k,\ell} X_{k,\ell}.
\]
In the logarithmic coordinates $(z,\sigma)$ one has  $Q_{1,\ell}(p) = z_\ell$, $\ell=1,...,m$. 
\end{proposition}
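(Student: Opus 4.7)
The plan is to work in the logarithmic coordinates $(z,\sigma)=(z_1,\ldots,z_m,\sigma_{2,1},\ldots,\sigma_{k,m_k})$ and match coefficients of $\partial_{\sigma_{j',\ell'}}$ on both sides of the desired identity. With the convention $\sigma_{1,\ell}:=z_\ell$ and $m_1:=m$, the expression \eqref{Z1} reads $\mathscr Z = \sum_{j=1}^{k} j\sum_{\ell=1}^{m_j}\sigma_{j,\ell}\,\partial_{\sigma_{j,\ell}}$. The substantive task is to express each left-invariant vector field $X_{j,\ell}$ in these same coordinates.

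For this I would use the Baker--Campbell--Hausdorff formula \eqref{BCH}. Writing $p=\exp(\eta)$, the part linear in $t$ of $\eta\star(t\,e_{j,\ell})$ is
\[
t\,e_{j,\ell}+\frac{t}{2}[\eta,e_{j,\ell}]+\frac{t}{12}[\eta,[\eta,e_{j,\ell}]]+\cdots,
\]
a finite series by the $k$-nilpotency of $\bg$. Differentiating at $t=0$ gives
\[
X_{j,\ell} \;=\; \partial_{\sigma_{j,\ell}} \;+\; \sum_{j'>j}\sum_{\ell'=1}^{m_{j'}} P^{j,\ell}_{j',\ell'}(z,\sigma)\,\partial_{\sigma_{j',\ell'}},
\]
where each $P^{j,\ell}_{j',\ell'}$ is a polynomial in the coordinates. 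The key structural point is that $P^{j,\ell}_{j',\ell'}$ is homogeneous of $\delta_\lambda$-degree $j'-j$: a nested bracket $[\eta_{i_1},[\eta_{i_2},\ldots,[\eta_{i_r},e_{j,\ell}]\cdots]]$ built from components $\eta_{i_s}\in\bg_{i_s}$ lies in $\bg_{j+i_1+\cdots+i_r}$, so a term contributing to $\partial_{\sigma_{j',\ell'}}$ must come from indices with $i_1+\cdots+i_r=j'-j$.

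Step three is bookkeeping. Substituting the above into $\sum_{j,\ell}Q_{j,\ell}X_{j,\ell}$ and collecting the coefficient of $\partial_{\sigma_{j',\ell'}}$, the identification with $\mathscr Z$ becomes the triangular system
\[
Q_{j',\ell'} \;=\; j'\sigma_{j',\ell'}\;-\;\sum_{j<j'}\sum_{\ell=1}^{m_j}Q_{j,\ell}\,P^{j,\ell}_{j',\ell'}.
\]
I would solve this by induction on $j'$. The base case $j'=1$ yields immediately $Q_{1,\ell'}=z_{\ell'}$, which is homogeneous of degree $1$ and matches the last sentence of the proposition. In the inductive step, assuming each $Q_{j,\ell}$ with $j<j'$ is a polynomial homogeneous of degree $j$, every product $Q_{j,\ell}\,P^{j,\ell}_{j',\ell'}$ has homogeneous degree $j+(j'-j)=j'$; hence so does $Q_{j',\ell'}$.

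The only real obstacle is step two: one must carefully unravel BCH to verify that in logarithmic coordinates $X_{j,\ell}$ is a triangular perturbation of $\partial_{\sigma_{j,\ell}}$ with polynomial coefficients of exactly the stated homogeneity $j'-j$. Once this is in place, the remaining argument is forced by the triangular structure together with the graded nilpotent character of $\bg$.
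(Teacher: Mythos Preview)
Your proposal is correct and follows precisely the approach the paper indicates: the paper does not give a detailed proof of Proposition~\ref{P:Z}, but merely states that it follows from the representation \eqref{Z1} and the Baker--Campbell--Hausdorff formula, which is exactly the route you take. Your triangular-system argument, with the homogeneity of the coefficients tracked via the graded structure of $\bg$, is a correct and natural fleshing-out of that hint.
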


In particular, Proposition \ref{P:Z} shows that $\mathscr Z$ involves differentiation not just in the horizontal directions, but in any layer of the stratification of $\bg$. As a consequence, the vector field $\mathscr Z$ is not horizontal.


\section{Proof of Theorem \ref{T:main} and Proposition \ref{P:ce}}\label{S:proof}

In this section we prove Theorem \ref{T:main} and  Proposition \ref{P:ce}. We begin by recalling the following result from \cite{Ged}, see also \cite{Gmanu2}.

\begin{proposition}[Right Bochner type identity]\label{P:Br}
Let $\bG$ be a Carnot group, $f\in C^3(\bG)$,
then one has
\begin{align}\label{B1r}
\sul(|\nhh f|^2) = 2 \langle\nhh f,\nhh(\sul f)\rangle + 2 \sum_{i=1}^m |\tilde{\nabla}_H(X_i f)|^2.
\end{align}
If in particular $\sul f = c$ for some $c\in \R$, then we have
\begin{equation}\label{B2r}
\sul(|\nhh f|^2) = 2 \sum_{i=1}^m |\tilde{\nabla}_H(X_i f)|^2 \ge 0.
\end{equation}
\end{proposition}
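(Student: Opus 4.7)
The plan is a direct computation exploiting the crucial commutation relation $[X_i,\tilde X_j]=0$ recorded in \eqref{i2}, which is the only structural fact needed beyond the Leibniz rule. Everything proceeds in close analogy with the classical (Euclidean) derivation of Bochner's identity, with $\tilde X_j$ playing the role of the ``partial derivative being differentiated'' and $X_i$ the role of the ``outer derivative''; the point of using the right-invariant gradient on the inside is exactly that it commutes with the left-invariant horizontal derivatives appearing in $\sul$.

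First I would expand the left-hand side of \eqref{B1r} as
\[
\sul(|\nhh f|^2) = \sum_{i,j=1}^m X_i^2\!\left((\tilde X_j f)^2\right),
\]
and peel off the outer $X_i$ by Leibniz: $X_i\big((\tilde X_j f)^2\big) = 2(\tilde X_j f)\,X_i\tilde X_j f$. The decisive step is to use $[X_i,\tilde X_j]=0$ to rewrite $X_i\tilde X_j f = \tilde X_j X_i f$, which is what allows the computation to generate a right-invariant \emph{carr\'e du champ} acting on $X_i f$. Applying $X_i$ a second time, again by Leibniz, produces one term $2(\tilde X_j X_i f)^2$ together with a mixed term $2(\tilde X_j f)\,X_i\tilde X_j(X_i f)$; a second invocation of $[X_i,\tilde X_j]=0$ converts the latter into $2(\tilde X_j f)\,\tilde X_j(X_i^2 f)$. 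Summing on $i,j$ and pulling $\sum_i$ inside $\tilde X_j$ in the mixed piece, the first contribution becomes $2\sum_{i=1}^m |\nhh(X_i f)|^2$ while the second becomes $2\langle \nhh f,\nhh(\sul f)\rangle$, delivering \eqref{B1r}.

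The second assertion \eqref{B2r} is then essentially free: substituting $\sul f = c$ into \eqref{B1r} kills the term $2\langle\nhh f,\nhh(\sul f)\rangle$, since $\tilde X_j c = 0$ for every $j$, leaving only the manifestly nonnegative sum of squares. I do not foresee any serious obstacle; the only point requiring a bit of care is not miscounting factors of $2$ when applying Leibniz twice, and remembering to commute $X_i$ and $\tilde X_j$ at \emph{both} differentiation steps so that every $X_i\tilde X_j$ is rewritten as $\tilde X_j X_i$ before the terms are regrouped.
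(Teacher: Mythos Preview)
Your proposal is correct and follows essentially the same route as the paper: a direct computation using the Leibniz rule together with the commutation identities $[X_i,\tilde X_j]=0$ to regroup terms into $2\langle\nhh f,\nhh(\sul f)\rangle + 2\sum_i |\nhh(X_i f)|^2$. The paper presents the computation slightly more tersely, but the argument is identical.
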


\begin{proof}
The proof is a straightforward calculation that uses the commutation identities $[X_i,\tilde X_j] = 0$, $i, j =1,...,m$, see \eqref{i2} above. One finds
\begin{align*}
\sul(|\nhh f|^2) & = 2 \sum_{i,j=1}^m \tilde X_j(X_i X_i f) \tilde X_j f + 2 \sum_{i,j=1}^m \tilde X_j(X_i f)  \tilde X_j(X_i f)
\\
&  = 2 \sum_{j=1}^m \tilde X_j(\sul f) \tilde X_j f  + 2 \sum_{i=1}^m |\nhh(X_i f)|^2, 
\end{align*}
from which \eqref{B1r} immediately follows.

\end{proof}

We can now present the proof of the maximum modulus theorem.

\begin{proof}[Proof of Theorem \ref{T:main}]
Suppose $\sul f = c$ in $B_r$. By hypoellipticity, we know that $f\in C^\infty(B_r)$. Consider the function $u = |\nhh f|^2$. In view of \eqref{B2r} in Proposition \ref{P:Br} such function satisfies $\sul u \ge 0$. From \eqref{cond} we have $0\le u \le 1$ in $B_r$. Furthermore, the assumption $|\nhh f(e)|^2 = 1$ implies that $u(e) = 1$. By the  connectedness of $B_r$ and Bony's strong maximum principle \cite{Bo} we conclude that $u \equiv 1$ in $B_r$. With this information in hands, we return to  \eqref{B2r} and infer that in $B_r$ we must have
\begin{equation}\label{ah}
\sul u  = \sum_{i=1}^m |\tilde{\nabla}_H(X_i f)|^2 \equiv 0.
\end{equation}
Now \eqref{ah} implies $\tilde{\nabla}_H(X_i f) \equiv 0$, for every $i=1,...,m$. Since also the right-derivatives $\tilde X_j$, $j=1,...,m$, satisfy the bracket generating assumption (see \cite{FS}), we infer that for every fixed $i\in \{1,...,m\}$, we have for all first derivatives of $X_i f$, 
\[
\p_{\sigma_{j,\ell}}(X_i f)\equiv 0\ \ \ \ \text{in}\  B_r,
\]
for $j=1,...,k$, $\ell =1,...,m_j$. From the connectedness of $B_r$ we thus conclude that there exist constants $c_1,...,c_m\in \R$ such that we have in $B_r$
\begin{equation}\label{eccoli}
X_i f \equiv c_i,\ \ \text{ for every}\ \ i=1,...,m.
\end{equation}
 We claim that \eqref{eccoli} implies the desired conclusion 
\[
f(p) = f(e) + \langle \nabla_z f(e),z\rangle,
\]
for every $p\in B_r$. 
This can be seen as follows. In view of  the bracket generating property of the vector fields $X_i$, we infer from \eqref{eccoli} that for every $i=2,...,k$ and $\ell=1,...,m_i$, we have
\[
X_{i,\ell} f\equiv 0.
\] 
Inserting this information in the expression of $\mathscr Z f$ given by Proposition \ref{P:Z},  we conclude that for every $p\in B_r$ one has 
\[
\mathscr Z f(p) = \sum_{\ell=1}^m Q_{1,\ell} X_\ell f = \langle z,c\rangle,
\]
where $c = (c_1,...,c_m)$ is defined by \eqref{eccoli}. This gives $\mathscr Zf(\delta_\la p) = \la\langle z,c\rangle$. But we have observed in Section \ref{S:back} that for every $r>0$ the set $B_r$ is starlike with respect to the identity $e$. This means that for every $p\in B_r$ the trajectory of dilations $\delta_\la(p)\in B_r$ for every $0\le \la\le 1$. The fundamental theorem of calculus thus gives
\begin{equation}\label{finito}
f(p) - f(e) = \int_0^1 \frac{d}{d\la} f(\delta_\la p) d\la = \int_0^1 \frac 1\la \mathscr Zf(\delta_\la p) d\la = \langle c,z\rangle = \langle \nabla_z f(e),z\rangle,
\end{equation}
which gives the desired conclusion.

\end{proof}

Finally, we provide the

\begin{proof}[Proof of Proposition \ref{P:ce}] 
The Heisenberg group $\Hn$ is perhaps the most important example of a Lie group of step $k=2$. Consider the first group $\mathbb H^1$ endowed with the left-invariant generators of the Lie algebra given by 
\begin{equation}\label{vf}
X_1 = \p_{x} - \frac{y}2\ \p_\sigma,\ \ \ \ \ X_{2} = \p_{y} + \frac{x}2\ \p_\sigma.
\end{equation}
If we let $T = \p_\sigma$, then the only nontrivial commutator is $[X_1,X_2] = T$. In $\mathbb H^1$ we now consider the function 
 \begin{equation}\label{f}
f(x,y,\sigma) = x + 6 y \sigma - x^3 - 21 \left(x\sigma^2 - \frac 18 xy^4 + \frac 13 y^3 \sigma - \frac{1}{40} x^5\right).
\end{equation}
For the reader's understanding, we mention that $f = P_1 + P_3 -21\ P_5$, where $P_1(x,y,\sigma) = x$, $P_3(x,y,\sigma) = 6 y \sigma - x^3$, and $P_5(x,y,\sigma) = x\sigma^2 - \frac 18 xy^4 + \frac 13 y^3 \sigma - \frac{1}{40} x^5$, are three harmonic polynomials of homogeneous degree $1, 3$ and $5$. This means that they solve in $\mathbb H^1$ the equation $\sul P_i = 0$, $i=1, 3, 5$, and that $P_i(\delta_\la p) =\la^i P_i(p)$, where $\delta_\la p = (\la x,\la y, \la^2\sigma)$ are the group non-isotropic dilations, see \eqref{dilg} and \eqref{dilG} above (the class of solid harmonics in $\mathbb H^1$ was introduced in \cite{Gr}, see also \cite{GK} for multi-dimensional generalisations). From \eqref{f} and \eqref{vf}, simple computations give 
\begin{equation}\label{Xu}
\begin{cases}
X_1 f = 1 - 3 |z|^2 - 21 \sigma^2 +\frac{21}8 x^4 + \frac{49}8 y^4 + 21 x y \sigma,
\\
\\
X_2 f = 6\sigma + 3 x y - 21 \sigma |z|^2 + 7 x y^3.
\end{cases}
\end{equation}
Furthermore, we have
\begin{equation}\label{X2u}
X_1^2 f = - 6 x + \frac{21}2 x^3 + 42 y \sigma - \frac{21}2 x y^2,\ \ \ \ \ \ \ \ \ \ X_2^2 f = 6x - \frac{21}2 x^3 - 42 y \sigma + \frac{21}2 x y^2.
\end{equation}
In particular $\sul f = 0$ in $\Ho$ (this conclusion could also be derived from the fact that $f$ is the sum of three harmonic polynomials, except that we have not verified explicitly that $\sul P_i = 0$). In order to simplify the understanding of $|\nh f|^2$, keeping in mind that $z = (x,y)$, we express \eqref{Xu} in the following way
\begin{equation}\label{Xuu}
X_1 f = 1 - 3 |z|^2 - 21 \sigma^2 + g(x,y,\sigma),\ \ \ \ \ X_2 f = 6\sigma  + h(x,y,\sigma),
\end{equation}
where we have respectively denoted 
\begin{equation}\label{giacca}
g(x,y,\sigma) = \frac{21}8 x^4 + \frac{49}8 y^4 + 21 x y \sigma,\ \ \ \ \ \ h(x,y,\sigma) = 3 x y - 21 \sigma |z|^2 + 7 x y^3.
\end{equation}
Since both $g$ and $h$ vanish at the origin, it is clear that 
\begin{equation}\label{grade}
|\nh f(0,0,0)|^2 = 1.
\end{equation}
The reader should notice that 
\begin{equation}\label{gh}
g(x,y,\sigma) = O((|z|^2 +\sigma^2)^{3/2}),\ \ \ \ \ \ h(x,y,\sigma) = O(|z|^2+\sigma^2).
\end{equation}
Using \eqref{Xuu} and \eqref{gh} it is now not difficult to verify that 
\begin{align}\label{nh}
|\nh f(x,y,\sigma)|^2 & = 1 - 6 (|z|^2 +\sigma^2)+ k(x,y,\sigma),
\end{align}
where
\begin{align}\label{ko}
k(x,y,\sigma) & = 2\ g(x,y,\sigma) + g(x,y,\sigma)^2 + h(x,y,\sigma)^2 + 12\ \sigma h(x,y,\sigma) - 6\ |z|^2 g(x,y,\sigma)
\\
& - 42\ \sigma^2 g(x,y,\sigma) + 126\ |z|^2 \sigma^2 + 9\ |z|^4 + 441\ \sigma^4.
\notag
\end{align}
It is clear that $k(0,0,0) = 0$ and a moment's thought reveals that near $(0,0,0)$ we have
\begin{equation}\label{k}
k(x,y,\sigma) = O((|z|^2 +\sigma^2)^{3/2}).
\end{equation}
From \eqref{nh} and \eqref{k} it is therefore clear that there exists $\rho>0$ such that for every $0<|z|^2+\sigma^2<\rho^2$ we have  
\begin{equation}\label{Br}
|\nh f(x,y,\sigma)|^2 < 1,
\end{equation}
therefore $|\nh f|^2$ has a strict maximum in $e = (0,0,0)$ in the Euclidean ball $B^{(e)}_\rho = \{(x,y,\sigma)\in \mathbb H^1\mid |z|^2 + \sigma^2 <\rho^2\}$, and yet $f$ is not linear. This proves that Theorem \ref{T:main} cannot possibly hold if we replace \eqref{cond} with \eqref{lcond}, thus establishing  the proposition.

\end{proof}

Some final comments are in order. In view of \eqref{grade}, \eqref{Br} and of Bony's strong maximum principle, it is clear that it is not possible for the harmonic function $f$ in \eqref{f} to satisfy
\[
\sul(|\nh f|^2) \ge 0
\]
in a connected open set contained in $B^{(e)}_\rho$. In fact, we are next going to show that the opposite inequality does hold in a full neighbourhood of the origin. From \eqref{nh} we see that
\begin{align}\label{lapF}
\sul(|\nh f|^2) &= - 6 \sul(|z|^2 +\sigma^2)+ \sul k(x,y,\sigma) = - 24 - 12 |\nh \sigma|^2 + \sul k(x,y,\sigma)
\\
& = - 24 - 3 |z|^2 + \sul k(x,y,\sigma). \notag
\end{align}
From \eqref{giacca} and \eqref{ko} we observe that $k(x,y,\sigma)$ is a sum of polynomials having homogeneous degree  $\ge 4$ with respect to the non-isotropic group  dilations. Therefore, $-3|z|^{2}+\Delta_{H}k(x,y,\sigma)$ is the sum of  polynomials with homogeneous degree $\ge 2$. Consequently, by \eqref{lapF} we must have
\begin{equation}\label{op}
\sul(|\nh f|^2) \le 0,
\end{equation}
in a sufficiently small neighbourhood of the origin. For further instances of this typically sub-Riemannian negative phenomenon, and some basic implications of it, the reader should see \cite{Ged}.



\bibliographystyle{amsplain}

\end{document}